\newlength{\dhatheight}
\newcommand{\whathat}[1]{{%
  \mathpalette\double@widehat{#1}%
}}
\newcommand{\double@widehat}[2]{%
  \sbox\z@{$\m@th#1\widehat{#2}$}%
  \ht\z@=.9\ht\z@
  \widehat{\box\z@}%
}
\theoremstyle{plain}
\newtheorem{thm}{Theorem}[section]
\newtheorem{lem}[thm]{Lemma}
\newtheorem{prp}[thm]{Proposition}
\theoremstyle{definition}
\newtheorem{dfn}[thm]{Definition}
\newtheorem{fact}[thm]{Fact}
\theoremstyle{remark}
\newenvironment{eqpar*}{\begin{equation*}\begin{minipage}{0.8\columnwidth}}%
{\end{minipage}\end{equation*}}
\mathchardef\mhyphen="2D
\DeclareMathOperator{\lh}{\ell h}
\DeclareMathOperator{\ran}{ran}
\providecommand{\forces}{\Vdash}
\providecommand{\R}{R}
    \DeclareMathOperator{\s}{s}
    \DeclareMathOperator{\terminal}{tmnl}
    \newcommand{\restrict}{\!\upharpoonright\!}
    \newcommand{\baire}{\omega^\omega}
    \newcommand{\append}{{^\frown\!}}
    \newcommand{\incfcn}{[\omega]^\omega_{\bullet}}
\def\R{{\mathbb R}}
\def\bL{{\mathbb L}}
\author[Schrittesser]{David Schrittesser}
\address{David Schrittesser, Institute for Advanced Study in Mathematics, Harbin Institute of Technology, 92 West Da Zhi Street, Harbin City, Hei\-long\-jiang Province 150001, China, \emph{and}}
\address{Harbin Institute of Technology Suzhou Research Institute, Building K, 500 Nan Guandu Road, Suzhou City, Jiangsu Province 215104, China.}
\email{david@logic.univie.ac.at}
\author[Törnquist]{Asger Törnquist}
\address{Asger Törnquist, Department of Mathematical Sciences, University of Copenhagen, Universitetspark 5, 2100 Copenhagen, Denmark}
\email{asgert@math.ku.dk}
\title{The happy coexistence of mad families and Laver measurability}
\subjclass[2020]{03E05, %
03E15 %
03E60 %
05D10
}
\keywords{Mad families, descriptive definability, measurability and regularity properties, Laver and Mathias forcing}
\begin{document}

\begin{abstract}
Let $x$ denote a Laver real over $L$. We prove that in $L[x]$ there is a $\Pi^1_1$ infinite mad family. Since $\Pi^1_1$ and $\Sigma^1_2$ sets are Laver measurable in $L[x]$, this shows that there are examples of well-behaved classical pointclasses $\Gamma$, namely $\Gamma=\Pi^1_1$ and $\Gamma=\Sigma^1_2$, where $\Gamma$-uniformization and ``all sets in $\Gamma$ are Laver measurable'' hold, but there is a mad family in $\Gamma$. This result stands in contrast to that in \cite{pnas} it was obtained that for reasonable pointclasses, the $\Gamma$-Ramsey property together with uniformization implies that there are no mad families in $\Gamma$. 
\end{abstract}

\maketitle

\section{Introduction}

A.D.R. Mathias classic paper {\it ``Happy Families''} \cite{mathias} connects in a remarkable way (1) Ramsey-theoretic measurability properties for families of subsets of the natural numbers $\omega$, (2) the kind of forcing reals, now known as \emph{Mathias reals}, that arise from Mathias forcing, and (3) the combinatorial structures known as \emph{mad} families, that is, \emph{maximal almost disjoint families}, of subsets of $\omega$. The understanding of these connections have since been developed in many directions by various authors, of which \cite{brendle, chan-jackson, haga-schrittesser-toernquist, horowitz-shelah-no-mad,horowitz-todorcevic, neeman-norwood, asger} are perhaps the most immediately relevant to the present discussion.

\medskip

The present paper is a contribution to the understanding of the connection between (3) and the other two. Let us first discuss (3) and the forcing point of view (2); some remarks about the measurability view follows afterwards. (The precise definition the Mathias and Laver posets are reviewed in \ref{ss.forcingmeas} below.)

\medskip

One picture that emerges from Mathias' and subsequent work can be summarized as: Definable mad families do not co-exist happily with Mathias reals. An illustration of this, which we will consider prototypical, and which can be derived easily from the results of \cite{pnas}, is: If $x$ is a Mathias real over $L$ (G\"odel's constructible universe), then there are no $\Pi^1_1$ mad families $L[x]$ (and then there are no $\Sigma^1_2$ mad families in $L[x]$ either, by the trick in \cite{tornquist-pi}).

\medskip

By contrast, it turns out that definable mad families coexist just fine with some other forcing reals, including, but certainly not limited to, Cohen and Random reals; this is delightfully demonstrated in \cite{horowitz-shelah-no-mad}.

\medskip

Why this difference? The most obvious difference between the reals considered in \cite{horowitz-shelah-no-mad}, which \emph{can} coexist with definable mad families, and Mathias reals, is the fast growth of the latter. The proof in \cite{pnas} supports the suspicion that this is important, though other properties of the Mathias poset, especially the Prikry property (also called the ``pure decision property''), also seem to play a role.

\medskip

Among the classical forcing posets, the Laver poset is one whose generic reals grow very fast, and Laver forcing also has the Prikry property. So the natural question arises if Laver reals could replace Mathias reals in relation to the definability of mad families. That is, can Laver reals coexist with definable mad families, or are they similar to Mathias reals in that they do not allow such coexistence?

The purpose of this paper is to prove the following theorem which shows, somewhat surprisingly, that Laver reals on a very basic level can coexist with definable mad families:

\begin{thm}\label{t.lavermad}
Let $x$ be a Laver real over $L$. Then there is an infinite $\Pi^1_1$ mad family in $L[x]$.
\end{thm}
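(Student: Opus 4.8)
The plan is to carry out, inside $L[x]$, the known construction of a $\Pi^1_1$ mad family in $L$, with the constructible hierarchy $\langle L_\gamma \rangle_{\gamma}$ replaced by the relativised hierarchy $\langle L_\gamma[x] \rangle_\gamma$ and the $\Sigma^1_2$-good wellordering of $L$ replaced by the canonical $\Sigma^1_2(x)$-good wellordering $<_{L[x]}$ of the reals of $L[x]$. First I would list the infinite subsets of $\omega$ in $L[x]$ as $\langle r_\alpha : \alpha < \omega_1 \rangle$ along $<_{L[x]}$ (note $\omega_1^{L[x]} = \omega_1^{L}$ since Laver forcing is proper) and define a sequence $\langle A_\alpha : \alpha < \omega_1 \rangle$ of infinite subsets of $\omega$ by recursion: at stage $\alpha$ I keep $\{A_\beta : \beta \le \alpha\}$ almost disjoint while ensuring that some $A_\beta$ with $\beta \le \alpha$ meets $r_\alpha$ infinitely, which requires action only when $r_\alpha$ is almost disjoint from all earlier members. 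Catching every $r_\alpha$ makes $\mathcal{A} = \{A_\alpha : \alpha < \omega_1\}$ a mad family in $L[x]$; all the work lies in making $\mathcal{A}$ coanalytic.

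For the definability I would arrange, as in the $L$-case, that each member of $\mathcal{A}$ codes a witness to its own appearance in the recursion, so that a real $z$ lies in $\mathcal{A}$ if and only if there is a countable wellfounded model $M \models \ZFminus + V = L[\dot x]$ with $z, x \in M$ that computes $z$ to be one of the $A_\alpha$'s of its internal recursion, subject to coherence conditions forcing $M$'s computation to agree with the true one. Since wellfoundedness of a coded model is $\Pi^1_1$ and the remaining clauses can be made arithmetic in the real coding $(M,z)$, this produces a $\Pi^1_1$ definition of $\mathcal{A}$ (a priori with $x$ as a parameter); the reduction of the ``there is a correct $M$'' clause to a genuine $\Pi^1_1$ condition is the reflection step familiar from $\Pi^1_1$ maximal families, while removing the parameter to obtain a lightface $\Pi^1_1$ family, if wanted, is a further and more delicate bookkeeping matter.

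The hard part will be to show that the recursion reflects correctly to these countable models, i.e. that the value of $A_\alpha$ computed inside $L_\gamma[x]$ for the relevant countable $\gamma$ agrees with the value computed in $L[x]$. This is exactly where the structure of Laver forcing must enter. I would use that Laver forcing has continuous reading of names — every real of $L[x]$ is $F(x)$ for some ground-model-coded function $F$ continuous on a Laver tree, obtainable via a fusion argument together with the pure-decision (Prikry) property noted in the introduction — and that this representation is recognised correctly inside the pertinent $L_\gamma[x]$. Controlling the new reals of $L[x]$ in this uniform way is what lets me choose the catching set $A_\alpha$ both $<_{L[x]}$-definably and absolutely between $L_\gamma[x]$ and $L[x]$, so that the coanalytic code genuinely picks out $\mathcal{A}$ and maximality survives.

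Conceptually, the decisive point is that this strategy succeeds precisely because the obstruction of \cite{pnas} is absent for Laver reals. In the Mathias extension the same plan is provably doomed: there the Ramsey property holds for $\Pi^1_1$ and $\Sigma^1_2$ sets, and by \cite{pnas} the Ramsey property together with $\Pi^1_1$-uniformization excludes every $\Pi^1_1$ mad family. Over a Laver real one has only Laver measurability of these pointclasses, which does not deliver the full Ramsey property, so no such obstruction operates. I would therefore expect the genuine crux of the verification to be the combinatorial fact that almost-disjointness interacts benignly with Laver names — the fast-growing $x \in \omega^\omega$ is not itself an almost-disjointness-critical object, in contrast to the Mathias generic — allowing the catching sets to be added while respecting the $\Pi^1_1$ coding. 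Establishing this benign interaction, and with it the maximality of $\mathcal{A}$ in $L[x]$, is where I would concentrate the effort.
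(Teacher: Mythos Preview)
Your proposal has a genuine gap. The construction you outline---enumerate the reals of $L[x]$ along $<_{L[x]}$ and catch them one by one, then code the recursion via wellfounded models of fragments of $V=L[x]$---produces only a $\Pi^1_1(x)$ mad family, with the Laver real $x$ as a parameter. You acknowledge this and call removing the parameter ``a further and more delicate bookkeeping matter,'' but it is not bookkeeping: it is the entire content of the theorem. The argument you describe works verbatim in $L[m]$ for a Mathias real $m$ (any $L[y]$ with $y$ a real has a $\Sigma^1_2(y)$-good wellorder, so Miller-style coding gives a $\Pi^1_1(y)$ mad family there), yet by \cite{pnas} there is no lightface $\Pi^1_1$ mad family in $L[m]$. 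So your argument as stated uses nothing Laver-specific and cannot establish the lightface conclusion; the ``reflection'' step you flag as the hard part is in fact the routine part, and the Laver-specific step is precisely the one you defer.

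The paper avoids the parameter from the outset by working entirely in $L$, not in $L[x]$. Rather than enumerating reals of the extension, it enumerates in $L$ all pairs $(p_\xi, f_\xi)$ with $p_\xi\in\bL$ and $f_\xi:[p_\xi]\to[\omega]^\omega$ continuous---essentially, the Laver names for infinite subsets of $\omega$. The Laver-specific combinatorics is isolated in a Main Lemma: for any such $(p,f)$ there are $q\leq p$ and a continuous $\tilde f:[q]\to[\omega]^\omega$ with $\tilde f(x)\subseteq f(x)$ for all $x\in[q]$ and $\ran(\tilde f)$ almost disjoint. At stage $\xi$, if $p_\xi$ forces $f_\xi(x_G)$ to be almost disjoint from what has been built so far, one adjoins the entire analytic a.d.\ family $\ran(\tilde f_\xi)$ at once; then $\tilde f_\xi(x_G)$ witnesses that $f_\xi(x_G)$ is caught, for whichever Laver generic is eventually chosen. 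The union is a lightface $\Sigma^1_2$ a.d.\ family (here Miller's Hechler/Laver dichotomy is used to show the relevant forcing relation is $\Pi^1_1$), and \cite{tornquist-pi} converts this to lightface $\Pi^1_1$. The Main Lemma---refining a continuous family of infinite sets to an almost disjoint one on a Laver subtree---is the ``benign interaction'' you gesture at in your last paragraph, but it is a concrete combinatorial statement with a nontrivial proof, and it is exactly the idea your plan is missing.
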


One may take this theorem as an expression of that it truly is the Ramsey-theoretic properties of Mathias forcing that make the proof in \cite{pnas}, and in turn \cite{HeLuoZhang}, work.

\subsection{Forcing and measurability properties}\label{ss.forcingmeas} There is a well-known connection between many classical forcing notions and measurability properties. The best-known are of course the connection between Cohen forcing and the Baire property, and Random forcing and Lebesgue measurability. The paper \cite{brendle-lowe} expands this greatly by generally considering measurability notions associated with so-called \emph{arboreal} forcing notions, a general class of forcing notions to which all considered here belong.

\medskip

We keep here a narrow focus and consider only the measurability notions associated with Mathias and Laver forcing, and refer to \cite{brendle-lowe} for the general theory. For the convenience of the reader, and for the purpose of fixing notation that will later be used in relation to the Laver poset, let us set down the otherwise well-known definitions of the Mathias and Laver posets:

\begin{dfn}\

\begin{enumerate}
\item The Mathias poset (originally defined in \cite{mathias-thesis,mathias}, but see also \cite[p.~524]{jech}), denoted $\R$, consists of conditions of the form $(a,A)$ where $a,A\subseteq\omega$, $a$ is finite and $A$ is infinite, and $a\sqsubseteq A$ (meaning that $a$ is an initial segment of $A$, that is, $\max(a)<\min (A\setminus a)$). The extension relation is $(b,B)\leq (a,A)$ iff $a\sqsubseteq b$ and $B\subseteq A$.

\item The Laver poset (originally defined in \cite{Laver1976}, but see also \cite[p.~565]{jech}), denoted $\bL$, consists of all \emph{stemmed Laver trees}, that is, infinite subtrees $p\subseteq\omega^{<\omega}$ where there is unique $s\in p$ (the ``stem'' of $p$) such that
$$
\hspace{1cm}(\forall t\in p)\ t\subseteq s\vee \big(s\subseteq t\wedge t\append i\in p\text{ for infinitely many } i\in\omega\big). 
$$
We will write $\s(p)$ for the stem of $p$. The ordering of $\bL$ is inclusion, i.e. $p\leq q$ just in case $p\subseteq q$ (so the stem can grow longer, and above the stem we may thin out the tree). If $s(p)=\emptyset$ (that is, $p$ is ``stemless'') is called simply a Laver tree.

When $G\subseteq\bL$ is a sufficiently generic filter (meaning that $G$ meets sufficiently many dense sets), then
$$
x_G\overset{def}=\bigcup_{p\in G} s(p)
$$
defines an element of $\omega^\omega$. When $G$ is $M$-generic for some transitive model $M$, we say that $x_G$ is a \emph{Laver real over $M$}.


\end{enumerate}
\end{dfn}

The associated measurability notions are defined as follows:

\begin{dfn}
\begin{enumerate}
\item For each $(a,A)\in\R$, define
$$
[a,A]=\{B\subseteq\omega\mid a\sqsubset B\subseteq A\}.
$$
Then a set $X\subseteq [\omega]^\omega$ is $\R$-measurable iff it is complete Ramsey, that is, if for all $(a,A)\in\R$ there is $B\subseteq A$ such that
$$
[a,B]\subseteq X\text{ or } [a,B]\cap X=\emptyset.
$$
Ellentuck's theorem (\cite{ellentuck}, but see also \cite[Theorem 19.?]{kechris}) states that being completely Ramsey is equivalent to having the Baire property in the \emph{Ellentuck topology}, whose basic open sets are $[a,A]$. 
\item For each $p\in\bL$, let $[p]$ denote the set of infinite branches through $p$. We will say that $X\subseteq \omega^\omega$ is \emph{Laver measurable} if for all $p\in\bL$ there is $q\leq p$ such that
$$
[q]\subseteq X\text{ or } [q]\cap X=\emptyset.
$$ 
\end{enumerate}
\end{dfn}

A. Miller has shown \cite{miller2012} that when $x$ is a Laver real over $L$ then all $\Sigma^1_2$ subsets of $\omega^\omega$ are Laver measurable (even in a very strong way). With this in hand we can rephrase Theorem \ref{t.lavermad} in terms of measurability as follows:

\begin{thm}\label{t.lavermad2}
When $x$ is a Laver real, then $L[x]$ is a model of ZFC in which all $\Sigma^1_2$ sets are Laver measurable, but there is a $\Pi^1_1$ mad family.
\end{thm}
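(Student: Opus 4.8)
The plan is to treat Theorem~\ref{t.lavermad2} as a repackaging of Theorem~\ref{t.lavermad} together with the two ambient facts about $L[x]$ recorded above: that it satisfies $\ZFC$ and that Miller's theorem applies to it. Thus I would verify three clauses in turn and combine them, doing no new combinatorics beyond what Theorem~\ref{t.lavermad} already provides.

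First I would check $L[x]\models\ZFC$. By the definitions in~\ref{ss.forcingmeas}, a Laver real over $L$ is $x=x_G$ for some $L$-generic filter $G\subseteq\bL$. Since $\bL\in L$ is a set forcing (a definable subset of $\powerset(\omega^{<\omega})$) and $L\models\ZFC$, the generic extension $L[G]$ models $\ZFC$ by the forcing theorem. It is a standard feature of Laver forcing that the generic filter is recoverable from the generic branch, so that $L[x]=L[G]$; hence $L[x]\models\ZFC$.

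Second, I would simply invoke Miller's theorem \cite{miller2012}, quoted immediately before the statement: since $x$ is a Laver real over $L$, every $\Sigma^1_2$ subset of $\omega^\omega$ is Laver measurable in $L[x]$. This is the second clause verbatim and needs no further argument here. Third, Theorem~\ref{t.lavermad} yields an infinite $\Pi^1_1$ mad family in $L[x]$, which is the final clause. Combining the three gives Theorem~\ref{t.lavermad2}.

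All the real content sits in Theorem~\ref{t.lavermad}, which I am assuming; relative to it the present statement is a bookkeeping combination, and the ``main obstacle'' is conceptual rather than technical. The point to appreciate is that the three clauses are not in tension: the mad family is a $\Pi^1_1$ set of reals, hence $\Sigma^1_2$, and is therefore itself Laver measurable by Miller's theorem, yet this regularity does not prevent it from being maximal almost disjoint. This is precisely the contrast with the Ramsey/Ellentuck setting of \cite{pnas}, where measurability of the appropriate kind does obstruct maximality; here Laver measurability of the family, indeed of all $\Sigma^1_2$ sets, coexists with its madness, which is what makes the theorem worth stating separately.
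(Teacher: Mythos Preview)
Your proposal is correct and matches the paper's approach exactly: the paper states that Theorem~\ref{t.lavermad2} follows directly from Theorem~\ref{t.lavermad} together with Miller's result that all $\Sigma^1_2$ sets are Laver measurable in $L[x]$, with $L[x]\models\ZFC$ being a standard forcing fact. Your additional remarks on why the clauses are compatible are expository rather than part of the proof, but they are accurate.
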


One may read this theorem as saying that for pointclasses $\Gamma$ (in the sense of Moschovakis \cite{moschovakis}), $\Gamma$-Laver measurability and $\Gamma$-uniformization (the latter of which the pointclasses $\Pi^1_1$ and $\Sigma^1_2$ have outright), are not enough to ensure that there are no mad families in $\Gamma$. Contrast this with that, by \cite{pnas} and \cite{higher-dim}, for reasonable pointclasses it holds that the $\Gamma$-Ramsey property in conjunction with $\Gamma$-uniformization \emph{does} imply that there are no mad families in $\Gamma$.

\subsection*{Acknowledgments} David Schrittesser thanks the 
FWF for support through Ben Miller's project P29999 and Vera Fischer's START Prize Y1012, as well as the Government of Canada’s New Frontiers in Research Fund (NFRF), for support through grant NFRFE-2018-02164. Asger Törnquist gratefully acknowledges support for this research received from the Independent Research Fund Denmark through the grants DFF2 ``Operator algebras, groups, and quantum spaces'' and DFF1 grant no. 4283-00353B.

We thank Natasha Dobrinen, Stevo Todorčević, Michael Hrušák, Juris Steprāns, the participants of the Delta Workshop in Wuhan, 2023, as well as Jialiang He, Jintao Luo, and Shuguo Zhang, for interesting conversations on topics related to this paper.

\section{Proof of Theorems \ref{t.lavermad} and \ref{t.lavermad2} from the ``Main Lemma''}\label{laver}

In this section we first introduce some more background on Laver forcing, and we introduce a theorem due to A. Miller we'll need. Then we state without proof the ``Main Lemma'', which is the main technical step of the entire proof, and then see that Theorem \ref{t.lavermad} follows from the Main Lemma. (Theorem \ref{t.lavermad2} follows directly from Theorem \ref{t.lavermad}.)

\subsection{Notation and further facts about Laver forcing}

(1) Recall that the extension relation in $\bL$ is inclusion, that is $p\leq q$ if $p\subseteq q$. We will write $q\leq^* p$ just in case $q\leq p$ and $s(q)=s(p)$ (that is, the stem is unchanged).

(2) Given $p\in\bL$ and $t\in p$, we let $p/t$ denote the tree consisting of all $s\in p$ that are compatible with $t$. Note that $p/t$ is a stemmed Laver tree, whose stem extends $t$.

(3) Laver forcing has the \emph{Prikry property} (sometimes called ``pure decision'', see \cite[28.19]{jech}): If $p\forces\varphi_1\vee\cdots\vee\varphi_k$, then there is $q\leq^* p$ and $1\leq i\leq k$ such that $q\forces\varphi_i$.

(4) Laver forcing has ``continuous reading of names'' for reals in the sense of the following proposition:
\begin{prp}\label{p.contread}
 If $\tau$ is an $\bL$-name and $p\forces \tau\in\omega^\omega$, then there is $q\leq^* p$ and a continuous function $f:[q]\to\omega^\omega$, such that $q\forces f(x_G)=\tau$. Moreover, we can arrange that whenever $q'\forces f(x_G)_i=j$ for some $i,j\in\omega$ and $q'\leq q$, then actually $f(x)_i=j$ for all $x\in [q]$ with $x\supseteq s(q')$.
 \end{prp}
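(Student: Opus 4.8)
The plan is to construct $q\leq^* p$ together with the continuous $f$ by a single fusion argument, arranging that the value of each coordinate $\tau_i$ is decided, along every branch, at the first node where any decision is possible; the \enquote{moreover} clause will then reduce to a purity property of $q$.

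Before starting I would record the standard fusion apparatus for $\bL$. For $q\leq^* r$ write $q\leq_n r$ if moreover the first $n$ levels of splitting nodes of $r$ above the stem survive in $q$ with all their immediate successors; then any sequence with $p_{n+1}\leq_n p_n$ for all $n$ (starting from $p_0=p$) has a lower bound $q=\bigcap_n p_n\in\bL$ with $s(q)=s(p)$ and $q\leq_n p_n$ for all $n$. The one genuinely forcing-theoretic ingredient is a decision lemma for a single integer-valued coordinate: for every $r\in\bL$ with $r\forces\tau_i\in\omega$ there are $r'\leq^* r$ and a front $F$ in $r'$ (an antichain above $s(r')$ meeting every branch of $[r']$) such that $r'/u$ decides $\tau_i$ for each $u\in F$. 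Since $\tau_i\in\omega$ is an infinite disjunction, the Prikry property (item (3) above) cannot be applied to it directly; instead I would assign to each condition a rank measuring its distance from deciding $\tau_i$, use pure decision at splitting nodes to show the rank is always defined after a suitable pure extension, and read off the front from the rank-$0$ nodes. This lemma, and its refinement below, is the technical heart of the argument.

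With the decision lemma in hand I would run the main fusion, processing under some bookkeeping all pairs $(t,i)$ consisting of a committed splitting node $t$ and a coordinate $i$: at the corresponding stage I apply the decision lemma to $p_n/t$ and $i$, thin only above $t$ accordingly, and commit the resulting front as the next splitting nodes, so that afterward $p_{n+1}/u$ decides $\tau_i$ for every node $u$ of the front. The bookkeeping is arranged so that every coordinate is eventually handled above every node, whence in the fusion limit $q$, along each branch $x\in[q]$ and for each $i$ there is a least node $u\subseteq x$ with $q/u$ deciding $\tau_i$; setting $f(x)_i$ equal to that decided value defines a total continuous $f:[q]\to\omega^\omega$, and $q\forces f(x_G)=\tau$ because along the generic branch the decided values are precisely the values of $\tau$.

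Finally, the \enquote{moreover} clause is equivalent to the following purity property of $q$, which I would build in during the fusion: for every splitting node $t$ of $q$ and every $i$, if some $r\leq^* q/t$ decides $\tau_i$ then already $q/t$ decides $\tau_i$. Indeed, if $q'\leq q$ and $q'\forces f(x_G)_i=j$, then $q'\forces\tau_i=j$ (as $q\forces f(x_G)=\tau$); writing $t=s(q')$, every node of $q'$ is comparable with $t$, so $q'\subseteq q/t$ and $s(q')=t=s(q/t)$, i.e.\ $q'\leq^* q/t$; purity then forces $q/t$ to decide $\tau_i$, necessarily with value $j$, and hence $f(x)_i=j$ for all $x\in[q]$ with $x\supseteq t$. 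The main obstacle is exactly to secure this purity together with continuity inside the rigid Laver structure (where every node above the stem must split): the decision lemma must be strengthened so that the front for $\tau_i$ above $t$ is \emph{honest}, meaning that below the front the decided values can never be collapsed to a single value by further thinning, and this stability must be preserved by all later stages of the fusion. This is the delicate bookkeeping point, handled by committing each front into the skeleton and arranging, via the decision lemma, that above every not-yet-deciding node the surviving values remain un-monochromatizable.
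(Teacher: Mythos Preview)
Your plan is correct and lands on the same underlying mechanism as the paper, but you organize the fusion differently. The paper does not use the $\leq_n$ apparatus or any $(t,i)$-bookkeeping; instead it freezes the coordinates \emph{sequentially}: first find a maximal antichain (front) $\mathcal A_0$ in $p$ of nodes $t$ admitting $q_t\leq^* p/t$ with $q_t$ deciding $\tau_0$; then inside each $q_t$ find a front $\mathcal A_{1,t}$ for $\tau_1$; and so on, setting $q=\bigcap_i\bigcup_{t\in\mathcal A_i} q_t$ and $f(x)_i=j_t$ for the unique $t\in\mathcal A_i$ with $t\subseteq x$. This is lighter than your scheme: the nesting of the $q_t$'s replaces the $\leq_n$ machinery, and the fronts play exactly the role of your decision lemma. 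Conversely, your treatment of the ``moreover'' clause is more explicit than the paper's sketch, which does not address it at all. Your reduction to the purity property (if some $r\leq^* q/t$ decides $\tau_i$ then already $q/t$ does) is exactly right, and your ``honest fronts'' amount simply to taking each $\mathcal A_i$ to be the \emph{minimal} front of decidability: at any node strictly between the stem and $\mathcal A_i$ no $\leq^*$-extension decides $\tau_i$, and this property is downward-preserved under $\leq^*$, so it survives all later stages automatically---no delicate bookkeeping is actually needed.
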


\begin{proof}[Sketch of proof]
This is essentially a standard fusion argument, but since the idea comes back again and again later, it is worth sketching how it goes.

\medskip

Find a maximal antichain $\mathcal A_0\subseteq p$ in the tree $p$ such that for every $t\in\mathcal A_0$ there is $j_t$ and $q_t\leq^* p/t$ such that $q_t\forces \tau_0=j_t$. Then for each $t\in\mathcal A_0$, find a maximal antichain $\mathcal A_{1,t}$ in $q_t$ such that for all $t'\in\mathcal A_{1,t}$ there is $j_{t'}$ and $q_{t'}\leq^* q_t/t'$ such that $q_{t'}\forces \tau_1=j_{t'}$, and let $\mathcal A_1=\bigcup_{t\in\mathcal A_0} A_{1,t}$. Continue to ``freeze'' the values of $\tau_0,\tau_1,\ldots$ in this way (using maximal antichains $\mathcal A_{i+1,t}\subseteq q_t$, where $t\in\mathcal A_i$). Then let
$$
q=\bigcap_{i}\bigcup_{t\in\mathcal A_i} q_t,
$$
and for $x\in [q]$ let $f(x)_i=j_t$ iff $x\supseteq t$ for some $t\in \mathcal A_i$.
\end{proof}

\subsection{A theorem of Arnold Miller}  For the proof of Theorem \ref{t.lavermad}, an important component comes from a theorem due to Miller, \cite{miller2012}, which we will now recall. In this context, a \emph{Hechler tree} is a tree $H\subseteq\omega^{<\omega}$ such that for every $t\in H$, the set
$$
\{i\in\omega: t\append i\in H\}
$$
is cofinite. (So a Hechler tree is a rather fat Laver tree). Given a Laver condition $p\in\bL$, we call $H\subseteq p$ a Hechler tree \emph{in $p$}, if for all $t\in H$ with $t\supseteq s(p)$ we have that
$$
\{i\in\omega: t\append i\in H\}
$$
is a cofinite subset of $\{i\in\omega: t\append i\in p\}$.

\begin{thm}[Miller, \cite{miller2012}]
(1) Let $A\subseteq \omega^\omega$ be an analytic set. Then either there is a Laver tree $p$ such that $[p]\subseteq A$, or there is a Hechler tree $H$ such that $[H]\cap A=\emptyset$.

(2) If $A$ is $\Sigma^1_1$ (lightface), then the following effective strengthening holds: Either there is a $\Delta^1_1$ Hechler tree $H$ such that $[H]\cap A=\emptyset$, or else there is a Laver tree $p$ such that $[p]\subseteq A$
\end{thm}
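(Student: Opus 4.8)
The plan is to prove both parts by a single transfinite derivative on a tree representing $A$, reading off a Laver tree on the ``large'' side of the derivative and a Hechler tree on the ``small'' side, and then to observe that the whole construction is effective enough to give the lightface strengthening in part~(2). First I would fix a tree $T$ on $\omega\times\omega$ with $A=p[T]$ (recursive $T$ in the lightface case), so that for the section tree $T_x=\{t : (x\restriction\lh(t),t)\in T\}$ we have $x\in A$ if and only if $T_x$ is ill-founded. Call a pair $(s,t)\in T$ with $\lh(s)=\lh(t)$ a \emph{position}; extending a position appends one symbol to \emph{each} coordinate, so any branch of positions grows coherently into some $(x,y)\in[T]$. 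Say a set $S$ of positions is \emph{self-supporting} if every $(s,t)\in S$ has infinitely many $i$ for which $(s\append i,t\append j)\in S$ for some $j$, and let $G$ be the largest self-supporting set, i.e. the greatest fixed point of the corresponding monotone operator, obtained by iterating the removal of unsupported positions through the ordinals until the process stabilizes. The rank $\rho(s,t)$ of a position not in $G$ is the stage at which it is removed.

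If $G\neq\emptyset$ I would build the Laver tree directly. Starting from any $(s,t)\in G$, recursively select at each retained node infinitely many successors supported inside $G$, attaching to each tree-node the corresponding one-symbol extension of the witness. Because every step lengthens both coordinates by exactly one symbol, each branch $x$ of the resulting tree $p$ carries a coherent $y$ with $(x,y)\in[T]$, so $[p]\subseteq A$; and the infinite branching installed at every node makes $p$ a Laver tree. This is the clean half, and the coherence of the witnesses along branches is the crucial point that the self-supporting condition is designed to guarantee.

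The interesting case is $G=\emptyset$, where every position has a rank. For a position $(s,t)$, the definition of removal means that \emph{cofinitely many} first-coordinate successors $i$ satisfy: every extension $(s\append i,t\append j)\in T$ has rank strictly below $\rho(s,t)$. This cofiniteness is exactly what a Hechler tree requires, and the aim is to assemble these cofinite successor sets into one Hechler tree $H$ for which every section tree $T_x$, $x\in[H]$, is well-founded, giving $[H]\cap A=\emptyset$. \emph{Here is where I expect the main obstacle.} At a single node $s$ there may be infinitely many witnesses $t$ with $(s,t)\in T$, each prescribing its own cofinite set of rank-dropping successors, and the intersection of infinitely many cofinite sets need not be cofinite; so one cannot naively demand that a single Hechler successor lower the rank of every witness at once. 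I would resolve this by a fusion: build $H$ level by level, enumerate all positions, and at each step thin only finitely, arranging the bookkeeping so that along every $x\in[H]$ and every putative witness $y$ the ranks $\rho(x\restriction n,y\restriction n)$ strictly decrease infinitely often; since the ordinals are well-ordered, this forces $T_x$ to be well-founded. Driving the rank down for all witnesses without exhausting the cofinite branching at any node is the technical heart of the proof, and it is precisely the place where the fast, cofinite growth of Hechler trees is genuinely used. (The two alternatives are mutually exclusive, since a Laver tree and a Hechler tree always share a branch: at every node an infinite set meets a cofinite set, so one cannot have both $[p]\subseteq A$ and $[H]\cap A=\emptyset$.)

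Finally, for part~(2) I would track effectivity throughout. The derivative is a monotone arithmetic induction on the recursive tree $T$: the self-supporting set $G$ is $\Sigma^1_1$, its complement within $T$ is $\Pi^1_1$, and the stage function $\rho$ is a $\Pi^1_1$-rank. When $G=\emptyset$ the whole recursive tree $T$ is ranked, so by $\Sigma^1_1$-boundedness the ranks are bounded below $\omega_1^{CK}$ and $\rho$ is in fact $\Delta^1_1$ on $T$. The level-by-level fusion that reads off $H$ from $\rho$ is then explicit in the hyperarithmetic data, producing a $\Delta^1_1$ Hechler tree $H$ with $[H]\cap A=\emptyset$; otherwise $G\neq\emptyset$ and the construction of the previous paragraph supplies a Laver tree $p$ with $[p]\subseteq A$.
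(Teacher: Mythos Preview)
The paper does not give a proof of this theorem: it is quoted from Miller \cite{miller2012}, and the only argument supplied is the one-line remark that part~(2) follows because Miller's ordinal analysis can be carried out inside $L_{\omega_1^{CK}}$. So there is no ``paper's proof'' to compare with, but your derivative-and-rank outline is very much in the spirit of Miller's argument, and two pieces of it are fine as written: the self-supporting set $G$ does yield a stemmed Laver tree with $[p]\subseteq A$ when $G\neq\emptyset$, and your effectivity bookkeeping for part~(2) (the rank is a $\Pi^1_1$-rank on a recursive tree, bounded below $\omega_1^{CK}$ when $G=\emptyset$) is exactly the observation the paper alludes to.

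The Hechler-tree half, however, has a genuine gap, and it sits precisely at the point you yourself flag as the ``main obstacle.'' Your plan is to arrange, via a fusion that at each level handles only finitely many witnesses, that along every $x\in[H]$ and every $y$ with $(x,y)\in[T]$ the ranks $\rho(x\restriction n,y\restriction n)$ strictly decrease \emph{infinitely often}, and then to conclude well-foundedness ``since the ordinals are well-ordered.'' But an infinite sequence of ordinals that drops infinitely often is perfectly possible (e.g.\ $1,0,1,0,\dots$): between the drops the rank may climb back up, because your own analysis shows that at each position only \emph{cofinitely many} first-coordinate successors force a drop, so the finitely many exceptional successors can raise the rank arbitrarily. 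What you actually need is that the ranks are \emph{eventually strictly decreasing}, i.e.\ that every witness, once caught by the bookkeeping, stays caught forever. That runs straight back into the infinite-intersection problem: keeping a witness $t$ caught at the next level means tracking all $t\append j$ with $(s\append i,t\append j)\in T$, and there may be infinitely many such $j$. Worse, even the weaker goal of ``infinitely often'' cannot be met by a fusion that at level $n$ attends to a \emph{finite} set $E_n\subseteq\omega^n$ of candidate witnesses: a straightforward diagonalisation produces $y\in\omega^\omega$ with $y\restriction n\notin E_n$ for every $n\geq 1$, so this $y$ is never handled at all. Miller's construction of $H$ in the $G=\emptyset$ case is organized differently and is the real content of his proof; your sketch has not yet supplied an argument that does this work.
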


Note that while (2) above is not stated explicitly in \cite{miller2012}, it follows rather easily from Miller's proof, since the ordinal analysis in Miller's proof can be carried out within $L_{\omega_1^{CK}}$. From Miller's theorem we obtain the following analogue of \cite[Fact 3.3]{mrl}:

\begin{fact}\label{f.miller}
(1) If $A\subseteq\omega^\omega$ is an analytic set and 
$$
p'\forces_{\bL} x_G\in A,
$$
then there is $p\leq p'$ (indeed, $p\leq^* p'$) such that $[p]\subseteq A$.

(2) If $\psi(x,y)$ is a $\Pi^1_1$ formula, then the set
$$
\{(p,a)\in\bL\times\omega^\omega: p\forces_{\bL}\psi( x_G,\check a)\}
$$
is $\Pi^1_1$.
\end{fact}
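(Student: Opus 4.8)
The plan is to obtain both parts of Fact \ref{f.miller} by \emph{relativizing} Miller's dichotomy to an arbitrary condition. I would first fix a coding of $\bL$ as a standard Borel space (identifying a tree with its characteristic function on $\omega^{<\omega}$), so that $t\in p$, $q\leq p$, $q\leq^* p$, $x\in[p]$ and ``$p$ is a stemmed Laver tree'' are arithmetic and $p\mapsto p/t$ is recursive. For a condition $p$ I would introduce the canonical tree-isomorphism $h_p$ from $\omega^{<\omega}$ onto $\{t\in p: t\supseteq s(p)\}$, sending $\emptyset$ to $s(p)$ and the $n$-th node in $\omega^{<\omega}$ to the $n$-th element (in increasing order) of $\{i: t\append i\in p\}$ above the corresponding node. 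This induces a homeomorphism $\omega^\omega\cong[p]$; its point is that it carries Laver trees (empty stem) to stemmed Laver subtrees of $p$ with stem $s(p)$, i.e.\ to conditions $\leq^* p$, and carries Hechler trees to Hechler trees \emph{in} $p$, while being recursive in $p$ (so effective complexity is preserved).

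For part (1): given $p'\forces x_G\in A$ with $A$ analytic, I would apply Miller's dichotomy to the analytic set $h_{p'}^{-1}(A)$. If the Hechler alternative held, there would be a Hechler tree $K$ with $[K]\cap h_{p'}^{-1}(A)=\emptyset$, so its image $H$ under $h_{p'}$ is a Hechler tree in $p'$ with $H\leq^* p'$ and $[H]\cap A=\emptyset$; since the generic is a branch through every condition in the filter and $[H]\cap A=\emptyset$ is absolute (it asserts wellfoundedness of an associated tree), this forces $H\forces x_G\notin A$, contradicting $p'\forces x_G\in A$. Hence the Laver alternative holds, yielding a Laver tree $q$ with $[q]\subseteq h_{p'}^{-1}(A)$, and then $p=h_{p'}(q)\leq^* p'$ has $[p]\subseteq A$.

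For part (2), write $A_a=\{x:\psi(x,a)\}$ and $B_a=\omega^\omega\setminus A_a$ (which is $\Sigma^1_1$). I would first record
\[
p\forces\psi(x_G,\check a)\iff \neg\,\exists q\leq p\ [q]\subseteq B_a,
\]
whose nontrivial direction is part (1) applied to the analytic set $B_a$, the other being absoluteness. Next I would decompose $\exists q\leq p$ as $\exists t\in p$ with $t\supseteq s(p)$ followed by $\exists q\leq^*(p/t)$, and apply the \emph{effective} dichotomy (Miller's part (2), transported through $h_{p/t}$) to the $\Sigma^1_1(a,p,t)$ set $B_a\cap[p/t]$. To turn the resulting ``either/or'' into an equivalence I would verify the two alternatives are \emph{mutually exclusive}: a Laver subtree $q\leq^*(p/t)$ and any Hechler tree $H$ in $p/t$ share a branch, since at every node the infinite successor set of $q$ meets the cofinite (in $p$) successor set of $H$, and such a common branch would lie in $B_a\cap A_a=\emptyset$. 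Thus ``$\exists$ Laver subtree of $p/t$ inside $B_a$'' is equivalent to ``there is no $\Delta^1_1(a,p/t)$ Hechler tree in $p/t$ with body $\subseteq A_a$''.

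The main work, and where I expect the real difficulty, is the complexity bookkeeping in this last clause: I must check that ``there exists a $\Delta^1_1(a,p/t)$ Hechler tree $H$ in $p/t$ with $[H]\subseteq A_a$'' is $\Pi^1_1$. Here I would enumerate the $\Delta^1_1(a,p)$ reals by codes $e$, use that ``$e$ is a valid code'' is $\Pi^1_1(a,p)$ and that the evaluation $e\mapsto H^{a,p}_e$ has $\Delta^1_1$ graph on valid codes, and invoke the standard effective fact that substituting a uniformly $\Delta^1_1$ real into the $\Pi^1_1$ matrix ``$[\,\cdot\,]\subseteq A_a$'' (with $A_a$ being $\Pi^1_1$) keeps the statement $\Pi^1_1$; since $\Pi^1_1$ is closed under the number quantifier $\exists e$, the clause is $\Pi^1_1$. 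Its negation is then $\Sigma^1_1$, the number quantifier $\exists t$ preserves $\Sigma^1_1$, so $\exists q\leq p\ [q]\subseteq B_a$ is $\Sigma^1_1(a,p)$, and taking complements gives that the forcing relation is $\Pi^1_1$, uniformly in $(p,a)$. The one further delicate point is confirming that $h_{p/t}$ transports not merely Miller's dichotomy but its effective refinement — i.e.\ that a $\Delta^1_1(a,p/t)$ Hechler tree in the $\omega^\omega$-picture pulls back to a $\Delta^1_1(a,p/t)$ Hechler tree in $p/t$ — which is exactly where recursiveness of $h_{p/t}$ in $(p,t)$ is used.
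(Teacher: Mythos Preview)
Your proposal is correct and follows essentially the same route as the paper: for (1) you relativize Miller's dichotomy to $p'$ (via the explicit tree-isomorphism $h_{p'}$, which the paper uses implicitly) and invoke Shoenfield absoluteness; for (2) you characterize $p\forces\psi(x_G,\check a)$ through the existence of a $\Delta^1_1$ Hechler tree avoiding the $\Sigma^1_1$ complement, and then carry out by hand exactly the complexity bookkeeping that the paper compresses into a citation of the Spector--Gandy theorem. Your extra decomposition over stems $t\in p$ and the explicit verification that the Laver and Hechler alternatives are mutually exclusive make the backward direction of the characterization cleaner than in the paper's one-line version, but the underlying argument is the same.
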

\begin{proof}[Proof of Fact \ref{f.miller}]
(1) If there is no such $p$, Miller's theorem gives us a Hechler tree $H\subseteq p'$ in $p'$ such that $[H]\cap A=\emptyset$. Since the statement $\forall x\in [H]\  x\notin A$ is $\Pi^1_2$, and therefore absolute by Shoenfield's theorem, we must have that $H\forces x_G\notin A$, which contradicts that $H\leq p'$ and $p\forces x_G\in A$.

(2) Fix $a\in\omega^\omega$, and consider the set
$$
A_{a,p}=\{x\in [p]: \neg\psi(x,a)\},
$$
which is a $\Sigma^1_1(a)$ set. Then it follows exactly as in (1), but now using the effective version of Miller's theorem, that $p\forces \psi(\dot x_G,\check a)$ if and only if there exists $H\in\Delta^1_1(a,p)$ which is Hechler in $p$ and $[H]\cap A_{a,p}=\emptyset$.

Note that we now have:
$$
p\forces \psi(x_G,\check a)\iff (\exists H\in\Delta^1_1(p,a)\text{ Hechler in }p)(\forall x\in H)
\ \psi(x,a).
$$
By the Spector-Gandy theorem (see, e.g., \cite{mansfield}) the right hand side gives the desired $\Pi^1_1$ definition.
\end{proof}

\subsection{Main Lemma and proof of Theorem \ref{t.lavermad}} We now turn to the proof of Theorem \ref{t.lavermad}. The proof uses an idea similar to that of \cite{mrl}, but Laver forcing does not fit the mould of the general theorems there, so we have to move much more carefully. In particular, the proof only applies to almost disjointness, and not to graph-theoretic discreteness in general, as was the case in \cite{mrl}.

\medskip

The main technical step of the proof is establishing the following lemma:

\begin{lem}[The Main lemma]\label{l.main}
Let $p\in\bL$ and suppose $f:[p]\to[\omega]^\omega$ is continuous. Then there $q\leq p$ and a continuous $\tilde f:[q]\to [\omega]^\omega$ such that $\ran(\tilde f)$ is almost disjoint and $\tilde f(x)\subseteq f(x)$ for all $x\in [q]$.
\end{lem}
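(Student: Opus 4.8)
The plan is to reduce the lemma to the construction of an injective \emph{labelling} of the nodes of a suitable $q\leq p$ by integers drawn from the sets $f(x)$. Concretely, I would build $q\leq p$ together with an assignment $s\mapsto\beta_s\in\omega$, defined for every node $s\in q$ with $\s(q)\subsetneq s$, such that: (i) $\beta_s\in f(x)$ for all $x\in[q]$ with $s\subseteq x$; and (ii) $s\mapsto\beta_s$ is injective. Granting this, I would simply put $\tilde f(x)=\{\beta_s : \s(q)\subsetneq s\subseteq x\}$. Then $\tilde f$ is continuous, since by (ii) a fixed integer $e$ labels at most one node, so ``$e\in\tilde f(x)$'' is decided by an initial segment of $x$; each $\tilde f(x)$ is an \emph{infinite} subset of $f(x)$ by (i) and because a branch meets infinitely many nodes; and for $x\neq y$ splitting at a node $t$ injectivity gives $\tilde f(x)\cap\tilde f(y)=\{\beta_s : s\subseteq t\}$, which is finite. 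Thus $\ran(\tilde f)$ is almost disjoint, and the lemma follows.

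To produce the labels I would work with the \emph{decided} sets $u_s=\bigcap\{f(x):x\in[q],\ s\subseteq x\}$. By continuity of $f$, for every branch $x$ and every $k\in f(x)$ the membership $k\in f(x)$ is already forced by an initial segment of $x$; hence $\bigcup_{s\subseteq x}u_s=f(x)$, so along each branch the finite sets $u_s$ increase to the infinite set $f(x)$. There is therefore always fresh material to label with, and the whole problem reduces to arranging, by thinning, that the family $\{u_s\}$ admits an injective system of representatives.

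The construction is a fusion building $q$ and the labels simultaneously, processing the splitting nodes of $q$ in order of increasing length. When a node $t$ is reached I keep infinitely many of its successors $t\append i\in p$ and, using continuous reading of names (Proposition \ref{p.contread}) together with the Prikry property, thin each cone above $t\append i$ so as to decide a finite increment $u_{t\append i}\setminus u_t$. The key step is to apply the $\Delta$-system lemma to these finite increments: infinitely many of them form a $\Delta$-system with some root $\rho$; since $\rho$ is common to all retained cones I absorb it into $u_t$, after which the remaining increments are pairwise disjoint. Thinning a little further I ensure each increment is nonempty and consists of integers larger than every label used so far, and I let $\beta_{t\append i}$ be any element of the increment for $t\append i$. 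As the increments are globally fresh and pairwise disjoint the assignment is injective, and as infinitely many successors always survive, $q\in\bL$.

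The main obstacle is precisely the tension between (i) and (ii): for an arbitrary continuous $f$ a single integer may lie in $f(x)$ for an enormous set of incomparable branches — think of $f$ constant, or of $f(x)=\{\langle n,x(n)\rangle : n\in\omega\}$, where two branches that split may later re-converge on infinitely many coordinates — so that a naive per-node choice collides across incomparable nodes and destroys almost disjointness. The whole weight of the argument thus rests on exploiting the infinite branching of Laver trees to disjointify the increments (the $\Delta$-system step) while never retaining fewer than infinitely many successors, so that $q$ remains a genuine Laver condition, and on verifying that this thinning starves no branch of labels — which is guaranteed because the decided sets exhaust the infinite set $f(x)$ along every branch. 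This is exactly the point at which, as noted in the excerpt, Laver forcing fails to fit the general mould and one must proceed with care.
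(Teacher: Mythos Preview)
There is a genuine gap at the step ``Thinning a little further I ensure each increment is nonempty.'' Consider $f:[p]\to\incfcn$ given by $f(x)_n=\sum_{k\leq 2n}x(k)+n$ (or any continuous $f$ for which $f(x)_n$ depends on exactly $x\restrict(2n+1)$). Take any node $t$ of odd length $2m+1$: the stem $t$ already determines $f(x)_0,\ldots,f(x)_m$, while the immediate successor $t\append i$ contributes only the value $x(2m+1)=i$, which does not enter $f(x)_{m+1}$ (that needs $x(2m+2)$ as well). Hence $u_{t\append i}=u_t$ for \emph{every} $i$. Crucially, no $q\leq^* p/(t\append i)$ can decide any $f(x_G)_n$ with $n>m$, because such a $q$ still has infinitely many choices for $x_G(2m+2)$; so the increments remain empty after any amount of $\leq^*$-thinning, your $\Delta$-system step yields nothing, and there is no way to assign the labels $\beta_{t\append i}$ injectively. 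Since the same obstruction recurs at every odd level, extending the stem of $q$ does not help either.

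This is precisely the obstruction the paper isolates with its notion of \emph{$f$-deciding} nodes. When $t$ is not $f$-deciding (as every odd-length node is, in the example above) one cannot extract fresh decided values at the immediate successors; instead one must descend through a well-founded subtree of \emph{variable depth} below $t$ until one reaches nodes that do decide a fixed coordinate $f(x_G)_{i_0}$, and moreover decide it with pairwise \emph{distinct} values (Lemma~\ref{l.notdeciding}). The labels therefore live on the terminal nodes of an increasing chain of well-founded trees $T_0\subsetneq T_1\subsetneq\cdots$ (Lemma~\ref{l.thetree}), not on every node above the stem. Your fusion-with-$\Delta$-system idea is close in spirit to the paper's handling of the $f$-deciding case (compare the disjoint refinement of Proposition~\ref{p.refine}); the missing ingredient is the well-founded-tree analysis that produces labels in the non-deciding case.
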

We postpone the proof of the Main Lemma to the next section, and proceed to use the lemma to prove Theorem \ref{t.lavermad}.

\begin{proof}[Proof of Theorem \ref{t.lavermad}, given the Main Lemma]
We will define a $\Sigma^1_2$ predicate $\varphi(x)$ which defines a mad family in $L[r]$ (and in $L$), whenever $r$ is a Laver real over $L$. By \cite{tornquist-pi}, it then follows that there is a $\Pi^1_1$ mad family in $L[r]$. We omit the painstaking verification that the predicate $\varphi$ is $\Sigma^1_2$, as this follows exactly as in \cite{mrl}, but using Fact \ref{f.miller} above in place of \cite[Fact 3.3]{mrl}.

We work in $L$ to define $\varphi$; and $<_L$ is used for the canonical well-ordering of $L$. Let $(p_\xi, f_\xi)$, $\xi<\omega_1$, be a $\Sigma^1_2$ enumeration of all pairs of $p\in\bL$ and $f:[p]\to [\omega]^\omega$ continuous. We will define by recursion on $\xi<\omega_1$ a sequence of almost disjoint families $\mathcal A_\xi$, all of which will be $\mathbf{\Sigma}^1_1$.

To begin the recursion, let $\mathcal A_0$ be any $\Pi^0_1$ infinite a.d. family. Suppose $\mathcal A_\gamma$ has been defined for all $\gamma<\xi$, for some given $\xi<\omega_1$. If
$$
p_\xi\forces (\forall\gamma<\xi)(\forall y\in \mathcal A_\gamma) |f(x_G)\cap y|<\infty
$$
then by Fact \ref{f.miller} there is $p\leq^* p_\xi$ such that
$$
(\forall x\in [p])(\forall\gamma<\xi)(\forall y\in \mathcal A_\gamma) |f(x)\cap y|<\infty.
$$
Let $(q,\tilde f)$ be the $<_L$-least pair satisfying the Main Lemma for some such $p$ and $f=f\restrict [p]$, and let $\mathcal A_\xi=\ran(\tilde f)\cup\bigcup_{\gamma<\xi}\mathcal A_{\xi}$. If no such $p$ exists, then let $\mathcal A_\xi=\bigcup_{\gamma<\xi} \mathcal A_\gamma$. It is straight-forward to verify that
$$
\mathcal A=\bigcup_{\xi<\omega_1} \mathcal A_\xi
$$
is an almost disjoint family, and by an argument similar to that of \cite{mrl}, there is a natural $\Sigma^1_2$ predicate $\varphi(x)$ defining $\mathcal A$.

We claim that $\varphi$ defines a mad family in $L[r]$ whenever $r$ is a Laver real. It is clear that $\varphi$ defines an almost disjoint family in $L[r]$. To see that $\varphi$ defines a \emph{maximal} almost disjoint family in $L[r]$, suppose, seeking a contradiction, that this is not the case. Then there is some $p\in\bL$ and some name $\sigma$ for an element $[\omega]^\omega$ such that
\begin{equation}\label{eq.forces}
p\forces (\forall x) (\varphi(x)\to |\sigma\cap x|<\infty).
\end{equation}
Since Laver forcing has continuous reading of names (Proposition \ref{p.contread}) we can assume there is a continous function $f:[p]\to [\omega]^\omega$ (in $L$) such that
$$
p\forces f(x_G)=\sigma.
$$
Then there is $\xi<\omega_1$ such that $(p,f)=(p_\xi,f_\xi)$. By Miller's theorem, there is $p'\leq^* p$ we have
$$
(\forall x\in [p'])(\forall\gamma<\xi)(\forall y\in \mathcal A_\gamma)\ |f(x)\cap y|<\infty.
$$
The Main Lemma ensures that for any such $p'$ there is $(q,\tilde f)$ with $q\leq^* p'$ (and so also $q\leq^* p$) such that $\tilde f(x)\subseteq f(x)$ for all $x\in [q]$ and $\ran(\tilde f)$ is almost disjoint. Let $<_L$-least such $(q,\tilde f)$. Then by definition of $\mathcal A$ we must have that $\ran(\tilde f)\subseteq \mathcal A$. But then $q$, and therefore $p$, can't force that $|f(x_G)\cap y|=\infty$ for all $y\in\mathcal A$ since in the forcing extension we must have that $\tilde f(x_G)\subseteq f(x_G)$ and $\tilde f(x_G)\in\mathcal A$. So $p\not\forces (\forall y\in\mathcal A)\ |y\cap f(x_G)|<\infty$, which contradicts (\ref{eq.forces}).\end{proof}

\section{Proof of the Main Lemma}

We now turn to the technical work of proving \ref{l.main}, the Main Lemma.

\medskip

{\it Note on notation}: Elements of $[\omega]^\omega$ are infinite subsets of $\omega$. Obviously, each $A\in [\omega]^\omega$ corresponds to a strictly increasing function $\omega\to\omega: n\mapsto A_n$, where $A_n$ denotes the $n$'th element of $A$. Sometimes it is more useful to think of elements of $[\omega]^\omega$ as strictly increasing functions, and sometimes it is more useful to think of them as sets. To clearly indicate to the reader what the current viewpoint is, we'll write $\incfcn$ for the set of \emph{strictly increasing functions} $\omega\to\omega$. The $_\bullet$ is there to remind the reader that we're taking the function point of view.

\begin{dfn}\label{d.decides}
Let $p\in\bL$, $i\in\omega$, and let $f:\omega^\omega\to\omega^\omega$ be a continuous function. We say that $p$ \emph{decides $f(x_G)_i$} if $p\forces f(x_G)_i=j$ for some $j\in\omega$.
\end{dfn}

Note that if $f\in\incfcn$ and $p$ decides $f(x_G)_i$, then, since Laver forcing has the Prikry property, there is $q\leq^* p$ which decides all $f(x_G)_k$ for $k\leq i$. Moreover, using Proposition \ref{p.contread}, for any $p\in\bL$ we can find $q\leq^* p$ such that whenever $p/t\forces f(x_G)_i=j$, then then for any $x\in [q/t]$ we have $f(x)_i=j$.

\subsection{Strategy for proving the Main Lemma.} We now comment on the idea behind the proof of the Main Lemma, as the proof is rather technical. For this discussion, and for the later proof of the Main Lemma, we make the following definition:

\begin{dfn}
Let $p\in\bL$ and $f:[p]\to\incfcn$ be continuous. We will call $t\in p$ an \emph{$f$-deciding node} (in $p$) if for all $p'\leq^* p$ and all $i\in\omega$ there is $q\leq^* p'$ which decides $f(x_G)_i$. That is, $t\in p$ is $f$-deciding if for every $i\in\omega$ the set
$$
\{q\leq^* p: q\text{ decides } f(x_G)_i\}
$$
is $\leq^*$-dense below $p$.
\end{dfn}

Looking now at the Main Lemma and what it will take to prove it, given $f:[p]\to \incfcn$, our job is to find a suitable $q\leq^* p$ and define $\tilde f:[q]\to\incfcn$ such that $\ran(\tilde f(x))\subseteq \ran(f(x))$ and $\ran(\tilde f)$ is an almost disjoint family (when identified with a subsets of $[\omega]^\omega$).

At any given $t\in p$ with $t\supseteq s(p)$, we will ask if $t$ is $f$-deciding or not. If it is, then we can essentially obtain all the information we need about $f$ ``locally'', in the sense that, for every $i\in\omega$ and $q\leq^* p/t$, there is a condition $q_i\leq^* q$ that decides $f(x_G)_i$. But if $t$ is \emph{not} $f$-deciding, then there is some $i_0$ that requires us to look farther down the tree (below $t$) to find nodes that decide $f(x_G)_{i_0}$. In this case, we will prove in Lemma \ref{l.notdeciding} below that there is a wellfounded tree $T\subseteq p/t$ whose terminal nodes decide $f(x_G)_{i_0}$, and, crucially, we can arrange that \emph{different} terminal nodes of $T$ decide values for $f(x_G)_{i_0}$ that are \emph{different}.

Using what was just observed, we will then construct (in Lemma \ref{l.thetree} below) a sequence of well-founded subtrees $T_0\subsetneq T_1\subsetneq\ldots\subseteq p$, such that $T_\infty=\bigcup_{i\in\omega} T_i$ is in $\bL$ and $T_\infty\leq^* p$, and each $T_i$-terminal node of $t\in\terminal(T_i)$ decides $f(x_G)_{\eta_t}$ (for some $\eta_t\in\omega$) to be a unique value (that is, if $t'\neq t$ are terminal nodes in $T_i,T_j$ respectively, then the values decided for $f(x_G)_{\eta_{t'}}$ and $f(x_G)_{\eta_t}$ are different). As observed after Definition \ref{d.decides}, we can then find $q\leq T_\infty$ such that for $t\in\terminal(T_i)$ there is $\phi_t\in\omega$ so that for any $x\in[q/t]$ we must have $f(x)_{\eta_t}=\phi_t$. Then we define $\tilde f:[q]\to [\omega]^{\omega}$ to be
$$
\tilde f(x)=\{\phi_t: t\subseteq x\wedge (\exists i\in\omega)\ t\in\terminal(T_i)\}.
$$
This $\tilde f$ will have $\tilde f(x)\subseteq f(x)$, and since we have arranged that for $t\in\terminal(T_i)$ and $t'\in\terminal(T_j)$, if $t\neq t'$ then $\phi_t\neq\phi_{t'}$ we must have that $\tilde f(x)\cap\tilde f(x')$ is finite whenever $x\neq x'$.

\subsection{Proving the Main Lemma}

The outcome of carrying out the strategy outlined above, where one analyses what can be decided ``locally'' versus what needs a further exploration of the tree, can be neatly packaged into the following Lemma:

\begin{lem}\label{l.thetree}
Let $p\in\bL$ and let $f:[p]\to\incfcn$ be continuous. Then there is a sequence $T_0\subseteq T_1\subseteq\ldots\subseteq T_n\subseteq\ldots\subseteq p$ of well-founded subtrees of $p$, such that $T_\infty=\bigcup_{i\in\omega} T_i$ is a Laver condition with $T_\infty\leq^* p$, and such that for every $i\in\omega$ we have:
\begin{enumerate}
\item Every non-terminal node $t\in T_i$ with $t\supseteq s(p)$ has infinitely many immediate extensions in $T_i$.
\item Every terminal node in $T_i$ has infinitely many immediate extensions in $T_{i+1}$.
\item There are functions $\eta_i, \phi_i:\terminal(T_i)\to\omega$ and $\rho_i:\terminal(T_i)\to\bL$ such that for every $t\in\terminal(T_i)$ we have
\begin{enumerate}
\item $s(\rho_i(t))=t$;
\item $\rho_i(t)\forces f(x_G)_{\eta_i(t)}=\phi_i(t)$;
\item $T_{i+1}/t\subseteq\rho_i(t)$;
\item If $t'\in\terminal(T_j)$ for some $j\in\omega$ and $\phi_i(t)=\phi_j(t')$, then $i=j$ and $t=t'$.
\end{enumerate}
\end{enumerate}
\end{lem}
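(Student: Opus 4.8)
The plan is to construct the trees $T_i$ together with the auxiliary functions $\eta_i,\phi_i,\rho_i$ by a fusion along $\omega$, where the single recursive step grafts, above each terminal node of the tree built so far, a well-founded tree whose terminal nodes decide pairwise-distinct, ``fresh'' values of $f$. Concretely, I would start with $T_0=\{u:u\subseteq \s(p)\}$, whose only terminal node is $\s(p)$. Having built $T_i$ with its data, I obtain $T_{i+1}$ by replacing each $t\in\terminal(T_i)$ with a well-founded tree $S_t\subseteq\rho_i(t)$ of stem $t$ in which $t$ and every internal node past $t$ split infinitely (this secures (1) for $T_{i+1}$ and (2) for $T_i$, while $S_t\subseteq\rho_i(t)$ gives (3c)), and such that each $u\in\terminal(S_t)$ comes equipped with $\eta_{i+1}(u),\phi_{i+1}(u),\rho_{i+1}(u)$ satisfying (3a) and (3b). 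Setting $T_\infty=\bigcup_i T_i$, conditions (1) and (2) guarantee that every node of $T_\infty$ extending $\s(p)$ splits infinitely, so $T_\infty\in\bL$ with $\s(T_\infty)=\s(p)$, i.e.\ $T_\infty\leq^* p$.

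The heart of the argument is the single grafting step producing $S_t$, and here I would split into cases according to whether $t$ is $f$-deciding. If $t$ is \emph{not} $f$-deciding, I invoke Lemma \ref{l.notdeciding}, which furnishes exactly a well-founded tree $S\subseteq\rho_i(t)$ with stem $t$ whose terminal nodes decide a coordinate $f(x_G)_{i_0}$ and which, crucially, assigns \emph{pairwise distinct} values to distinct terminal nodes. If $t$ \emph{is} $f$-deciding, then by the remark following Definition \ref{d.decides} I may, after passing to some $q\leq^* \rho_i(t)$ furnished by Proposition \ref{p.contread}, read each coordinate $f(x_G)_\eta$ continuously on $[q]$; using the Prikry property I then peel off, one retained immediate successor of $t$ at a time, conditions deciding a chosen coordinate, thereby building a well-founded $S\subseteq q$ of stem $t$ in which $t$ splits infinitely and each terminal node decides its own value. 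In either case $S_t=S$ has the required shape.

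Global distinctness, requirement (3d), is the point where the hypothesis $f\in\incfcn$ does the work. Since each $f(x)$ is strictly increasing we have $f(x)_\eta\geq\eta$ for all $x$ and $\eta$; hence if I always choose the coordinate $\eta$ to be decided so that $\eta>\max F$, where $F$ is the (finite, at any bounded stage) set of values committed so far, then every newly decided value $\phi=f(x)_\eta\geq\eta>\max F$ is automatically fresh. Interleaving the grafting steps with a bookkeeping enumeration of the terminal nodes produced so far keeps $F$ finite at each step, so the values $\phi_i(t)$ can be kept pairwise distinct throughout, yielding (3d). The verifications of (3a)--(3c) are then immediate from the construction of $S_t$: $\rho_{i+1}(u)$ has stem $u$, forces the decided value $f(x_G)_{\eta_{i+1}(u)}=\phi_{i+1}(u)$, and will contain $T_{i+2}/u$ by the next grafting step.

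The main obstacle I anticipate is reconciling the two competing demands inside a single grafting step: $t$ and every internal node of $S_t$ must split into \emph{infinitely many} immediate successors so that $T_\infty$ is genuinely a Laver tree, while the \emph{infinitely many} resulting terminal nodes must decide \emph{distinct} fresh values. In the non-deciding case this is packaged into Lemma \ref{l.notdeciding}; in the deciding case the delicate point is that the continuous function $x\mapsto f(x)_\eta$ on $[q]$ may be constant on large subtrees, so a single fixed coordinate need not supply enough distinct values. I would circumvent this by allotting a \emph{separate}, rapidly increasing coordinate $\eta_k$ to the $k$-th retained immediate successor of $t$ and using the $f$-deciding density to decide it, the growth of the $\eta_k$ again guaranteeing distinctness via $f(x)_{\eta_k}\geq\eta_k$. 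Beyond this, one must check that each $T_i$ stays well-founded and that $T_\infty\subseteq p$ has unchanged stem; both follow from the fact that stems are only ever extended at freshly created terminal nodes and that $\leq^*$-extensions preserve the stem.
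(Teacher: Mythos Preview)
Your overall architecture matches the paper's: fuse $T_0\subsetneq T_1\subsetneq\cdots$, split at each terminal according to whether the node is $f$-deciding, and invoke Lemma~\ref{l.notdeciding} in the non-deciding case. The gap is in your treatment of~(3d).

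A single invocation of Lemma~\ref{l.notdeciding} at a non-deciding terminal $t\in\terminal(T_i)$ produces a well-founded tree $S_t$ with \emph{infinitely many} terminal nodes, and each such terminal $u$ must receive the value $\phi_{i+1}(u)=\vartheta(u)$, since that is precisely what $\rho(u)$ forces for the fixed coordinate $i_0$. Your ``choose $\eta>\max F$'' device can indeed be applied to pick $i_0$ large (non-decidability at $i_0$ propagates to all $i\geq i_0$ by the remark after Definition~\ref{d.decides}), but this only yields a \emph{lower bound} on the committed values: you obtain $\ran(\vartheta)\subseteq[i_0,\infty)$, with no further control over which infinite subset of $[i_0,\infty)$ it is. After this single step the set $F$ of committed values is already infinite. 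When you next process a sibling terminal $t'\in\terminal(T_i)$, no choice of coordinate will force the resulting $\ran(\vartheta')$ to be disjoint from the now-unbounded $F$; and thinning $S_{t'}$ helps only if $\ran(\vartheta')\setminus F$ happens to be infinite, which you cannot guarantee. The bookkeeping enumeration you propose does not avoid this, because the values $\vartheta(u)$ are fixed the moment $S_t$ is built, regardless of the order in which you later visit the terminals.

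The paper replaces the growth argument by a disjoint-refinement step (Proposition~\ref{p.refine}). For every relevant node $t$ one records in advance the infinite set $A_t$ of values available there---either the range of the function $g$ supplied by Lemma~\ref{l.deciding} in the deciding case, or $\ran(\vartheta)$ from Lemma~\ref{l.notdeciding} in the non-deciding case---and then passes to a pairwise disjoint refinement $\{B_t\}$ of the countable family $\{A_t\}$, setting $\phi_i(t)=\min B_t$. Disjointness of the $B_t$, not the growth bound $f(x)_\eta\geq\eta$, is what secures~(3d).
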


Let us once again postpone the technicalities, and prove the Main Lemma granted Lemma \ref{l.thetree}.

\begin{proof}[Proof of the Main Lemma]
Let $p\in\bL$ and let $f:[p]\to\incfcn$ be continuous. Let $(T_i)_{i\in\omega}$ and  $T_\infty$, as well as $\eta_i,\phi_i,\rho_i$, be as in Lemma \ref{l.thetree}. Let $q\leq^* T_\infty$ be such that when $t\in\terminal(T_i)$, then for all $x\in [q/t]$ we have $f(x)_{\eta_i(t)}=\phi_i(t)$. That such a $q$ exists by remark that follows Definition \ref{d.decides}.

Define $\tilde f:[q]\to [\omega]^\omega$ by 
$$
\tilde f(x)=\{\phi_i(t): i\in\omega\wedge t\subseteq x\wedge t\in\terminal(T_i)\}.
$$
Then $\tilde f(x)\subseteq \ran(f(x))$ for every $x\in [q]$ since $f(x)_{\eta_i(t)}=\phi_i(t)$ whenever $t\in\terminal(T_i)$. To see that $\ran(\tilde f)$ is almost disjoint, let $x,x'\in [q]$ and suppose $x\neq x'$. Let $m\in\omega$ be largest such $x\restrict m=x'\restrict m$. If $x\restrict m\subsetneq t\subseteq x$ and $x'\restrict m\subsetneq t'\subseteq x'$ and $t\in\terminal(T_i), t'\in\terminal(T_j)$, then part (d) of (3) in Lemma \ref{l.thetree} guarantees that $\phi_i(t)\neq\phi_j(t')$. It follows that
$$
\tilde f(x)\cap\tilde f(x')=\{\phi_k(t): k\in\omega\wedge t\subseteq x\restrict m\wedge t\in\terminal(T_k)\}
$$
which is finite since there are only finitely many $t\subseteq x\restrict m$. Thus $\ran(\tilde f)$ is an almost disjoint family.
\end{proof}

All that remains now is to prove Lemma \ref{l.thetree}. How the construction of $T_{i+1}$ from $T_i$ in Lemma \ref{l.thetree} will proceed below a given terminal $t\in T_i$ will depend on whether $t$ is $f$-deciding or not. The next lemma will tell us what to do in the case when it is not:

\begin{lem}\label{l.notdeciding}
Let $p\in\bL$, and let $f:[p]\to\baire$ be a continuous function. Suppose there is $i_0\in\omega$ so that no $q\leq^* p$ decides $f(x_G)_{i_0}$. Then there is 
\begin{enumerate}[label=$(\arabic*)$]
\item a wellfounded tree $T\subseteq p$;
\item an injection $\vartheta:\terminal(T)\to\omega$;
\item a function $\rho:\terminal(T)\to\bL$ with $\rho(t)\leq^* p/t$ for all $t\in\terminal(T)$;
\end{enumerate}
such that for every $t\in T$ with $t\supseteq s(p)$ we have:
\begin{enumerate}[label=$(\alph*)$]
\item if $t$ is terminal in $T$ then $\rho(t)\forces f(x_G)_{i_0}=\vartheta(t)$.
\item if $t$ is not terminal in $T$, then no $q\leq^* p/t$ decides $f(x_G)_{i_0}$.
\item if $t$ is not terminal in $T$, then $t$ has infinitely many immediate successors in $T$, and exactly one of the following hold: 

\begin{enumerate}[label=$(\roman*)$]
\item All immediate successors $t'\in T$ of $t$ are terminal; note that in this case $
\vartheta(\{ t'\in T : t'\supset t\})$ is infinite.

\item No immediate extension $t'\in T$ of $t$ is terminal.

\end{enumerate}
\end{enumerate}
\end{lem}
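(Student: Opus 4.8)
The plan is to build $T$ by a recursion that, at each node, invokes a dichotomy between nodes that \emph{can} be thinned to decide $f(x_G)_{i_0}$ and those that cannot. First I would put the name $f(x_G)$ into the nice form guaranteed by Proposition \ref{p.contread}, so that for $t\in p$ with $t\supseteq s(p)$ we have $p/t\forces f(x_G)_{i_0}=v$ exactly when $f(\cdot)_{i_0}\equiv v$ on $[p/t]$. Call a node $u\supseteq s(p)$ of $p$ \emph{Type D} if some $q\leq^* p/u$ decides $f(x_G)_{i_0}$, equivalently if the set $V(u)=\{v:(\exists q\leq^* p/u)\ q\forces f(x_G)_{i_0}=v\}$ is nonempty, and \emph{Type N} otherwise; the hypothesis says precisely that $s(p)$ is Type N. Since every splitting node of $p$ above $s(p)$ has infinitely many immediate successors, each of which is either Type D or Type N, at a Type N node $t$ either infinitely many immediate successors are Type D (in which case I declare $t$ a \emph{case-(i)} node and take its $T$-successors to be terminal Type D nodes) or infinitely many are Type N (a \emph{case-(ii)} node, whose $T$-successors I take to be non-terminal Type N nodes, and recurse). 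This makes (b) and (c) hold by construction, with $\rho(t)$ the witnessing condition and $\vartheta(t)$ its decided value at each terminal $t$.

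The crucial combinatorial input is a gluing observation: if $t$ is Type N, then \emph{each} value $v$ lies in $V(t\append i)$ for only finitely many immediate successors $t\append i$. Indeed, if $v\in V(t\append i)$ for infinitely many $i$, with witnesses $q_i\leq^* p/(t\append i)$ forcing the value $v$, then the tree obtained by following the stem up to $t$ and grafting each $q_i$ above the corresponding successor $t\append i$ is a condition $q\leq^* p/t$ (its first splitting node, hence stem, is $t$) which forces $f(x_G)_{i_0}=v$, contradicting that $t$ is Type N. From this it follows that at a case-(i) node $t$ the decidable values form an infinite set and, more usefully, that for any finite $F$ all but finitely many Type D successors have $V(t\append i)\setminus F\neq\emptyset$ (a successor with $V(t\append i)\subseteq F$ would meet the finite set $F$, and each value of $F$ occurs in only finitely many $V(t\append i)$). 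A greedy system-of-distinct-representatives argument then lets me select infinitely many Type D successors carrying pairwise distinct decided values that avoid any prescribed finite set of forbidden values; this delivers (c)(i) and the local injectivity of $\vartheta$.

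Well-foundedness of $T$ I would obtain from the continuity of $f$. For each $x\in[p]$ let $d(x)$ be least such that $f(\cdot)_{i_0}$ is constant on $[p/(x\restrict d(x))]$; this exists because $f$ is continuous. Then $p/(x\restrict d(x))$ decides $f(x_G)_{i_0}$, so $x\restrict d(x)$ is Type D. Since all non-terminal nodes of $T$ are Type N, no infinite branch of $T$ could pass through $x\restrict d(x)$ as a non-terminal node; hence $T$ has no infinite branch and is well-founded.

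The only genuinely global issue, and the step I expect to be the main obstacle, is making $\vartheta$ injective across the \emph{whole} tree rather than merely among the children of a single case-(i) node, since a naive depth-first construction would commit infinitely many values at one case-(i) node before reaching the next and thereby force me to avoid an infinite set. I would resolve this by interleaving: enumerate the case-(i) nodes and build the value assignment in rounds, assigning in round $r$ one further terminal child, with a freshly chosen decided value, to each of the first $r$ case-(i) nodes discovered so far. At every round only finitely many values have been committed, so the finite-avoidance consequence of the gluing observation applies and always yields a new (child, value) pair whose value lies outside the finite used set; letting $r\to\infty$ gives each case-(i) node infinitely many terminal children and a globally injective $\vartheta$. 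Assembling $T$ together with the maps $\vartheta$, $\rho$ and the witnessing conditions then yields (1)--(3) and (a)--(c).
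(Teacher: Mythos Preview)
Your proposal is correct and follows essentially the same route as the paper. Your Type~N nodes are exactly the paper's tree $T'$ (which it obtains in one stroke as a \emph{maximal} subtree of Type~N nodes, rather than by your recursive case split), your ``gluing observation'' is the paper's Claim~2, and your well-foundedness argument via continuity is identical. The only place you are more explicit than the paper is the global injectivity of $\vartheta$: the paper simply writes ``using Claim~2, we can easily build $T\supseteq T'$ and an injection $t\mapsto j_t$'', whereas you spell out the round-by-round interleaving that makes this work; conversely, the paper's maximal-$T'$ formulation automatically resolves the ambiguity in your case split (both ``infinitely many Type~D successors'' and ``infinitely many Type~N successors'' can hold simultaneously, and you should say which wins).
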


\begin{proof} To start the proof, we claim the following:

\medskip

{\bf Claim 1:} Under the assumptions of the Lemma: There is a well-founded tree $T'\subseteq p$ such that
\begin{enumerate}
\item if $t\in T'$ and $s(p)\subseteq t$, then no $q\leq^* p/t$ decides $f(x_G)_{i_0}$;
\item if $t\supseteq s(p)$ is not terminal in $T'$, then $t$ has infinitely many immediate extensions in $T'$;
\item if $t\in T'$ is terminal then the set
$$
A_t=\{i\in\omega: t\append i\in p\wedge (\exists q\leq^* p/{t\append i})\ q\text{ decides } f(x_G)_{i_0}\}
$$
is infinite.
\end{enumerate}

{\it Proof of Claim 1:} Let $T'$ be a maximal subtree of $p$ with $s(p)\in T$ such that (1) and (2) above hold. (That there is a subtree for which (1) and (2) hold is clear: $\{t:t\subseteq s(p)\}$ is such a tree.)  We just need to see that then (3) also holds for such a maximal $T'$. But if (3) fails for some terminal $t\in T'$, then there must be infinitely many $i$ with $t\append i\in p$ and no $q\leq^* p/t\append i$ which decides $f(x_G)_{i_0}$, and then clearly $T'$ is clearly not a \emph{maximal} subtree of $p$ for which (1) and (2) hold.

Finally, to see that $T'$ is wellfounded, suppose, seeking a contradiction, that $T'$ has an finite branch $x\in [T']$. Since $f$ is continuous, there must be some $n\in\omega$ such that $f(z)_{i_0}=f(x)_{i_0}$ for all $z\in [p/x\!\restrict\! n]$ (and we can of course assume that $n\geq\lh(s(p))$). Then $p/x\restrict n$ decides $f(x_G)_{i_0}$, contradicting $x\restrict n\in T'$ and $x\restrict n\supseteq s(p)$. \qed$_{\text{Claim 1}}$

\smallskip

{\bf Claim 2:} With $T'$ as in the previous claim we have: Given $t\in T'$ terminal and $k\in\omega$, there are only finitely many $i\in\omega$ such that some $q\leq^* p/t\append i$ decides that $f(x_G)_{i_0}=k$.

\smallskip
{\it Proof of Claim 2:} Otherwise we can construct $q\leq^* p/t$ which decides that $f(x_G)_{i_0}=k$, contradicting that $t\in T'$.\qed$_{\text{Claim 2}}$

\medskip

Using the Claim 2, we can easily build $T\supseteq T'$, as well as injections $\terminal(T)\to \omega:t\mapsto j_t$ and $\terminal(T)\to \bL:t\mapsto p_t$, such that each terminal node of $T$ is an immediate extensions of a terminal node in $T'$, and $p_t\leq^* p/t$ is a condition such that $p_t\forces f(x_G)_{i_0}=j_t$.
\end{proof}

Next we define the objects that we will need when we encounter a node in $p$ which \emph{is} $f$-deciding.

\begin{dfn} Let $p\in\bL$ and let $f:[p]\to\incfcn$ be continuous. A finite sequence $\vec u\in p^{<\omega}$ will be called $f$-deciding if $\vec u_j$ is $f$-deciding for all $j<\lh(\vec u)$ and $s(p)\subseteq \vec u_0\subsetneq\ldots\subsetneq \vec u_{\lh(\vec u)-1}$. We denote by $\Gamma=\Gamma_{p,f}$ the set of all $f$-deciding sequences.
\end{dfn}

The next lemma is really a recursive definition.

\begin{lem}\label{l.deciding}
With $p$ and $f$ as in the previous definition: We can associate to each $(\vec{u},\vec i)\in\Gamma\times\omega^{\lh(\vec u)}$ the following objects:
\begin{enumerate}[(a)]
\item a Laver condition $q_{\vec{u},\vec i}\leq^* p/\vec{u}_{\lh(\vec u)-1}$;
\item a strictly increasing function $g_{\vec u,\vec i}:\omega\to\omega$;
\end{enumerate}
such that the following holds: 
\begin{enumerate}[(*)]
\item Given $\vec u\append v\in\Gamma$ and $\vec i\in\omega^{\lh(\vec u)-1}$, the conditions $q_{\vec u\append v,\vec i\append j}$, where $j$ ranges over $\omega$, form a $\leq^*$-descending sequence of conditions below $q_{\vec u,\vec i}/s(v)$ such that $q_{\vec u\append v,\vec i\append j}\forces f(x_G)_{j}=g_{\vec u\append v,\vec i\append j}$.
\end{enumerate}
\end{lem}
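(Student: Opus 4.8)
The lemma is, as the authors remark, really a recursive definition, so the plan is a recursion on $n=\lh(\vec u)$ that builds $q_{\vec u,\vec i}$ and $g_{\vec u,\vec i}$ simultaneously. At the base, $\vec u=\langle\rangle$, I would set $q_{\langle\rangle,\langle\rangle}=p$ and let $g_{\langle\rangle,\langle\rangle}$ be any fixed element of $\incfcn$ (its values are never consulted). At stage $n+1$ I treat a one-step extension $\vec u\append v\in\Gamma$, where $v\supsetneq\vec u_{n-1}$ is a new $f$-deciding node lying in $q_{\vec u,\vec i}$; then all the work is local below $q_{\vec u,\vec i}/v$, which is a Laver condition with stem $v$ satisfying $q_{\vec u,\vec i}/v\leq^* p/v$.

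The core of the step is to produce the $\leq^*$-descending sequence $\langle q_{\vec u\append v,\vec i\append j}:j\in\omega\rangle$ demanded by $(*)$. Because $v$ is $f$-deciding, for every coordinate $j$ the set of conditions deciding $f(x_G)_j$ is $\leq^*$-dense below $p/v$, hence below $q_{\vec u,\vec i}/v$. Starting from $q_{\vec u,\vec i}/v$ I would recursively choose $q_{\vec u\append v,\vec i\append j}$ below the previous term and deciding $f(x_G)_j$; by the Prikry property together with the remark after Definition \ref{d.decides}, I may take each such condition to decide all of $f(x_G)_0,\dots,f(x_G)_j$ and to read these values off continuously on its branches. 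Defining $g_{\vec u\append v,\vec i\append j}$ to be a strictly increasing function extending the finite string of decided values then yields (b) and the forcing clause of $(*)$, namely that $q_{\vec u\append v,\vec i\append j}$ decides $f(x_G)_j$ to $g_{\vec u\append v,\vec i\append j}(j)$; strict monotonicity is automatic since $f$ takes values in $\incfcn$.

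The real obstacle is the coherence of the recursion, and it splits into two. First, I must know that $v$ is still $f$-deciding after passing to the thinned condition $q_{\vec u,\vec i}$; this is immediate, since any $r\leq^* q_{\vec u,\vec i}/v$ is also $\leq^* p/v$, so the density of deciding conditions transfers downward and $f$-deciding-ness is preserved under $\leq^*$-extension. Second, and more delicate, I must guarantee that the admissible next nodes $v$ genuinely survive into $q_{\vec u,\vec i}$, since each deciding step thins the tree and could in principle prune away $f$-deciding nodes. I would resolve this by performing the deciding steps at each level through the continuous-reading normal form of Proposition \ref{p.contread} together with a fusion/pigeonhole argument: when freezing a coordinate one retains, at each splitting node, an infinite set of immediate successors realizing a common decided value, so that after the thinning infinitely many $f$-deciding successors remain available to be appended. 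Once this bookkeeping is arranged, clauses (a), (b) and $(*)$ hold by construction and the recursive definition goes through.
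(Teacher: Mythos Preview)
Your proposal is correct and is exactly what the paper does: its entire proof is the single sentence ``The definition of these objects can be done by recursion on $\lh(\vec u)$.'' Your base case and recursive step are the right ones, and your first ``obstacle'' (that $f$-deciding-ness of $v$ persists when passing from $p/v$ to any $q\leq^* p/v$) is correctly dismissed as immediate.

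Your second ``obstacle'' is over-engineered, and the fusion argument you sketch is both unnecessary and insufficient for the worry as you phrase it. This lemma is purely a construction: nothing in its statement requires that any particular $f$-deciding node $v$ survive into $q_{\vec u,\vec i}$. At the recursive step one simply works below $q_{\vec u,\vec i}/v$ whenever $v\in q_{\vec u,\vec i}$, and otherwise makes any choice $\leq^* p/v$ deciding $f(x_G)_j$ (reading the clause ``below $q_{\vec u,\vec i}/s(v)$'' in $(*)$ as vacuous in that degenerate case, which never arises in the application anyway). Note too that your fusion would only retain \emph{infinitely many} successors at each node, not \emph{all} $f$-deciding $v$'s from $p$, so it would not actually meet the concern you raise. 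Any genuine bookkeeping about which nodes must be available is carried out in the proof of Lemma~\ref{l.thetree}, not here.
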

\begin{proof}
The definition of these objects can be done by recursion on $\lh(\vec u)$.
\end{proof}

{\it Remark.} The objects defined in the previous Lemma are (in most situations) not uniquely determined by the requirements in the Lemma, so for our purposes below, \emph{we fix once and for all} $(\vec u,\vec i)\mapsto (q_{\vec{u},\vec i},g_{\vec u,\vec i})$ so that Lemma \ref{l.deciding} holds.

\medskip

The last preparatory step for the proof of Lemma \ref{l.thetree} is the following essentially obvious proposition:

\begin{prp}\label{p.refine}
Let $\mathcal A=\{A_i:i\in\omega\}$ be a countable family of infinite subsets of $\omega$. Then there is a countable family $\mathcal B=\{B_i:i\in\omega\}$ of infinite subsets of $\omega$ such that
\begin{enumerate}
\item $(\forall i\in\omega)\ B_i\subseteq A_i$;
\item $B_j\subseteq A_i$ implies that $i=j$;
\item the family $\mathcal B$ consists of pairwise disjoint sets.
\end{enumerate}
\end{prp}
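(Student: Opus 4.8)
The plan is to construct all the $B_i$ simultaneously by a single recursion of length $\omega$ that commits one natural number at a time, maintaining throughout the two invariants that the finite pieces built so far are pairwise disjoint and that the piece assigned to index $i$ is contained in $A_i$. Concretely, I would fix an enumeration $(r_n)_{n\in\omega}$ of all \emph{requirements}, where a requirement is either of the form ``$B_i$ receives a new element'' (one for each $i\in\omega$, scheduled to recur infinitely often) or of the form ``$B_j\not\subseteq A_i$'' (one for each pair $i\neq j$, scheduled to be visited at least once). At stage $n$ one has defined finite sets $B^n_0,B^n_1,\dots$, all but finitely many empty, pairwise disjoint, with $B^n_i\subseteq A_i$, and in the end one sets $B_i=\bigcup_n B^n_i$.

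To act for ``$B_j$ receives a new element'', I pick any $k\in A_j$ not yet used in any $B^n_i$ and add it to $B_j$; such a $k$ exists because $A_j$ is infinite while only finitely many integers have been committed by stage $n$. To act for ``$B_j\not\subseteq A_i$'', I pick a \emph{witness} $w\in A_j\setminus A_i$ not yet used and add it to $B_j$; since then $w\in B_j\subseteq A_j$ but $w\notin A_i$, this permanently secures $B_j\not\subseteq A_i$. Because every integer added is chosen fresh, pairwise disjointness is preserved at every stage, so (3) holds in the limit; (1) holds since each element placed into $B_i$ was selected from $A_i$; and (2) is exactly the ``only if'' direction supplied by $B_j\subseteq A_j$ together with the conjunction over $i\neq j$ of the requirements ``$B_j\not\subseteq A_i$''. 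Letting the ``$B_i$ receives a new element'' requirements recur infinitely often makes each $B_i$ infinite.

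The single point requiring care, and the main obstacle, is that the step for ``$B_j\not\subseteq A_i$'' must never get stuck, i.e. $A_j\setminus A_i$ must always contain an integer not yet committed. As only finitely many integers are used at any finite stage, it suffices that $A_j\setminus A_i$ be infinite, and this is where one leans on the sets in $\mathcal A$ being pairwise almost disjoint (as they are in the intended application): then $A_j\cap A_i$ is finite, so $A_j\setminus A_i$ is cofinite in the infinite set $A_j$ and witnesses are available in abundance. Alternatively, at the first stage that considers a pair $(i,j)$ one may \emph{reserve} a single witness $w\in A_j\setminus A_i$ for $B_j$ before it can be claimed elsewhere, which is all that condition (2) demands. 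With fresh witnesses always at hand the recursion proceeds without obstruction, and the three desired properties hold for $\mathcal B=\{B_i:i\in\omega\}$.
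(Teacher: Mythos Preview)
The paper supplies no proof of this proposition at all; it is announced as ``essentially obvious'' and left at that. So there is nothing to compare strategies against, and the interesting content is whether your argument actually establishes the statement.

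Your recursion handles conditions (1), (3), and the infiniteness of each $B_i$ cleanly and correctly: at every finite stage only finitely many integers are committed, so a fresh element of $A_j$ is always available, and choosing fresh elements preserves pairwise disjointness.

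Where you are right to pause is condition (2), and in fact you have put your finger on a genuine defect in the \emph{statement} rather than a gap in your proof. As written, the proposition is simply false: take $A_0=\omega$ and $A_1$ any infinite set; then necessarily $B_1\subseteq A_1\subseteq A_0$, contradicting (2). More generally, (2) is attainable iff no $A_i$ contains another $A_j$, i.e.\ $A_j\setminus A_i\neq\emptyset$ for all $i\neq j$. Under that hypothesis your ``reserve a witness $w\in A_j\setminus A_i$'' manoeuvre works (and you do not even need $A_j\setminus A_i$ to be infinite, only nonempty, since a single reserved witness suffices and reservation prevents it from being consumed elsewhere). Your stronger assumption of almost disjointness is sufficient but not necessary.

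One caution: you assert that in the intended application the family $\mathcal A$ is almost disjoint. That is not evident from the construction in the proof of the tree lemma --- the sets $A_t$ there are ranges of the functions $g_{\vec u(t),\vec i}$ or $\vartheta_{t,\vec i}$, and nothing forces those ranges to be almost disjoint. However, if you trace through that proof you will see that only (1), (3), and the infiniteness of each $B_t$ are ever invoked (the key line uses pairwise disjointness of $\mathcal B$ to conclude $\min B_t\neq\min B_{t'}$ when $t\neq t'$). So condition (2) appears to be an inessential overstatement, and your construction already delivers everything the paper actually needs.
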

We will call a family $\mathcal B$ as in the previous proposition a \emph{disjoint refinement} of the family $\mathcal A$.


\begin{proof}[Proof of Lemma \ref{l.thetree}]
For use in the proof, we first define some auxiliary objects:

\begin{enumerate}
\item For $t\in p$ with $s(p)\subseteq t$, let $\vec u(t)$ denote the maximal sequence with $s(p)\subseteq u(t)_0\subsetneq u(t)_1\subsetneq\ldots\subseteq u(t)_{\lh(u(t))-1}\subsetneq t$ which is $f$-deciding (and let $\vec u(t)=\emptyset$ if no $s(p)\subseteq u\subseteq t$ is $f$-deciding).

\item {\it When $t$ is $f$-deciding} then we associate to $t$ the objects $q_{\vec u(t),\vec i}$ and $g_{\vec u(t),\vec i}$ for each $\vec i\in \omega^{\lh(\vec u(t)}$.

\item {\it When $t$ is not $f$-deciding} then for each $\vec i\in \omega^{\lh\vec u(t)}$, we associate to $t$ the objects $T_{t,\vec i} \subseteq q_{\vec u(t),\vec i}/t$, and $\vartheta_{t,\vec i}:\terminal(T_{t,\vec i})\to\omega$, as well as $\rho_{t,\vec i}:\terminal(T_{t,\vec i})\to\bL$, that we obtain by applying Lemma \ref{l.notdeciding} with $q_{\vec u(t),\vec i}/t$ playing the role of $p$ in that Lemma.
\item For every $t\in p$ with $t\supseteq s(p)$ and $\vec i\in \omega^{\lh(t)}$, the sets $A_{t,\vec i}\subseteq \omega$ are defined by
$$
A_t=
\begin{cases}
\ran(g_{\vec u(t),\vec i}) & \text{ if } t\text{ is } f\text{-deciding};\\
\ran(\vartheta_{t,\vec i}) & \text{ if } t \text{ is not } f\text{-deciding}.
\end{cases}
$$
\item Let $\mathcal A=\{A_{t}: t\in p\wedge s(p)\subseteq t\}$, and let $\mathcal B=\{B_t:t\in p\wedge s(p)\subseteq t\}$ be a disjoint refinement of $\mathcal A$.
\item Letting $n=\lh(s(p))$, for each $t\in p$ with $t\supseteq s(p)$ we define 
$$
\vec i_t=\langle \min B_{t\,\restrict\,n},\min B_{t\,\restrict\,(n+1)},\ldots, \min B_{t}\rangle.
$$
\end{enumerate}

To prove Lemma \ref{l.thetree}, we define wellfounded trees $T_0\subsetneq T_1\subsetneq\ldots\subseteq p$ recursively as follows: Let $T_0=\{t: t\subseteq s(p)\}$. If $T_i$ has been defined (and is wellfounded), then define for each $t\in\terminal(T_i)$ a wellfounded subtree $T_{i,t}\supseteq p/t$ as follows:
$$
T_{i,t}=\begin{cases}
\{r\in p/t: r\subseteq t\vee (\exists k)\  r=t\append k\in q_{\vec u(t),\vec i_t}\} & \text{ if } t\text{ is } f\text{-deciding};\\
T_{\vec u(t),\vec i_t} &\text{ if } t\text{ is not } f\text{-deciding}.
\end{cases}
$$
Let $T_{i+1}=\bigcup\{ T_{i,t}: t\in\terminal(T_i)\}$. Then (1) of (2) of Lemma \ref{l.thetree} clearly hold. To define $\eta_i,\phi_i:\terminal(T_i)\to\omega$ and $\rho_i:\terminal(T_i)\to\bL$, let $\phi_i(t)=\min B_t$, let 
$$
\rho_i(t)=\begin{cases}
q_{\vec u(t),\vec i_t} & \text{ if } t \text{ is } f\text{-deciding};\\
\rho_{t,\vec i_t} & \text{ if } t \text{ is not } f\text{-deciding},
\end{cases}
$$
and let $\eta_i(t)\in\omega$ be such that $\rho_i(t)\forces f(x_G)_{\eta_i(t)}=\phi_i(t)$. Clearly (a) and (b) of (3) in Lemma \ref{l.thetree} hold. An easy induction on $i\in\omega$ shows that (c) also holds. Finally, for (d), note that $\phi_i(t)\in B_t$, so since $\mathcal B$ consists of pairwise disjoint sets, (d) holds.\end{proof}

\bibliography{schrittesser-toernquist-ramsey-property-higher-mad}{}
\bibliographystyle{amsplain}
 
\end{document}